 \newtheorem{Thm}{Theorem}[section]
 \newtheorem{Lem}[Thm]{Lemma}
 \newtheorem{Prop}[Thm]{Proposition}
 \newtheorem{Cor}[Thm]{Corollary}
\theoremstyle{remark}
\newtheorem{Qu}[Thm]{Question}
\theoremstyle{definition}
\numberwithin{equation}{section}
\newcommand\inv{^{-1}}
\def\HM#1.#2.#3.#4.{{^{#1}_{#3}\mathcal M^{#2}_{#4}}}
\newcommand\Vect{\operatorname{Vect}}
\newcommand\Mod{\operatorname{Mod}}
\newcommand\Q{\mathbb Q}
\newcommand\CC{\mathbb C}
\newcommand\ZZ{\mathbb Z}
\newcommand\C{\mathcal C}
\newcommand\CTR{\mathcal Z}
\newcommand\semidir\rtimes
\newcommand\Inf{\operatorname{Inf}}
\newcommand\Gal{\operatorname{Gal}}
\newcommand\ab{{\operatorname{ab}}}
\begin{document}
\title[Equivalent modular data]{Modular categories are not determined by their modular data}
\author{Michaël Mignard}
\email{michael.mignard@u-bourgogne.fr}
\author{Peter Schauenburg}
\email{peter.schauenburg@u-bourgogne.fr}
\address{Institut de Math{\'e}matiques de Bourgogne, UMR 5584 CNRS
\\
Universit{\'e} Bourgogne Franche-Comté\\
F-21000 Dijon\\France}
\subjclass{18D10,16T05,20C15}

\keywords{}

\begin{abstract}
Arbitrarily many pairwise inequivalent modular categories can share the same modular data. We exhibit a family of examples that are module categories over twisted Drinfeld doubles of finite groups, and thus in particular integral modular categories.
\end{abstract}
\maketitle

\section{Introduction}
\label{sec:introduction}

A modular category is a braided spherical fusion category such that the square matrix $S$ whose coefficients are the traces of the square of the braiding on pairs of simple objects is invertible. This S-matrix, together with the T-matrix, which is diagonal and contains the values of a kink on the simple objects, defines a projective representation of the modular group on the complexified Grothendieck ring of the category. The two matrices together are referred to as the modular data of the modular category. 

The modular data constitute a very important invariant of modular categories, both powerful for the structure theory and classification of such categories, and central to their role in mathematical physics and low-dimensional topology. For the theory of modular categories, and their background in mathematical physics, we refer generally to \cite{BakKir:LTCMF}. 

It is a natural question whether the modular data constitute a complete invariant of modular tensor categories, or at least modular tensor categories in some interesting class. The authors of \cite{MR3486174} expressed their belief that this is true for integral modular categories; it is the present authors' impression, however, that the research community in general, including perhaps the authors of \cite{MR3486174}, did not have great faith in that conjecture. Still, it seems that to this date no explicit example was known of two modular categories that share the same modular data without being equivalent as modular categories. In recent computer-based work, the authors have provided evidence rather in favor of the conjecture: The twisted Drinfeld doubles of finite groups up to order $31$ which have the same modular data are equivalent; altogether, there are $1126$ inequivalent nontrivial modular categories that arise in this fashion. In fact they are already distinguished by the coefficients of the T-matrices taken together with the values of the higher Frobenius-Schur indicators for simple objects; this is a potentially weaker invariant in general.%, see \cite{Kei:AQSDPNG}. 

The present paper now exhibits a family of examples showing in fact that arbitrarily many modular categories can share the same modular data without being equivalent as braided tensor categories. Moreover, our examples are found among the particularly accessible class of module categories over twisted Drinfeld doubles of finite groups. In our examples the categories in each family either have identical modular data, or they are distinguished by their T-matrices.

\section{Preliminaries}
\label{sec:prelims}

Throughout the paper we consider fusion categories in the sense of \cite{MR2183279} defined over the field of complex numbers. A general reference for the theory of fusion categories is \cite{MR3242743}. We denote $\Vect_G^\omega$ the category of finite dimensional $G$-graded vector spaces with associativity constraint given by a normalized three-cocycle $\omega\colon G^3\to \C^\times$. We note that $G$ is determined by the fusion category $\Vect_G^\omega$ as its Grothendieck ring, and that $\Vect_G^\omega$ and $\Vect_G^{\nu}$ are equivalent as fusion categories if and only if there is an automorphism $f$ of $G$ such that $\nu$ and $f^*\omega$ are cohomologous.

A \emph{group-theoretical fusion category} is a fusion category which is categorically Morita equivalent to a pointed fusion category, see \cite{MR1976459}, and \cite{MR3077244} for an overview, or \cite[9.7]{MR3242743}. Any group-theoretical fusion category categorically Morita equivalent to $\Vect_G^\omega$ can be described in terms of a couple $(H,\mu)$ where $H$ is a subgroup of $G$ and $\mu\colon H\times H\to \CC^\times$ is a two-cochain  with $d\mu=\omega|_{H\times H\times H}$. The last condition ensures that the twisted group algebra $\CC_\mu[H]$, which has $H$ as a basis and multiplication $x\cdot y=\mu(x,y)xy$ for $x,y\in H$, is an associative algebra in the fusion category $\Vect_G^\omega$. The associated group-theoretical fusion category $\C(G,H,\omega,\mu)$ is the category of endofunctors of the category of $\CC_\mu[H]$-modules in $\Vect_G^\omega$. A slightly more elementary (but equivalent by Watts' theorem) description is that $\C(G,H,\omega,\mu)$ is the category of bimodules in $\Vect^\omega_G$ over the algebra $\CC_\mu[H]$, with the tensor product of bimodules taken over $\CC_\mu[H]$.

We note several elementary types of equivalences between group-theoretical fusion categories: If $f\colon G'\to G$ is an isomorphism of groups, then $\C(G,H,\omega,\mu)$ and $\C(G',f\inv(H),f^*\omega,f^*\mu)$ are equivalent. If $\theta\colon G^2\to\C^\times$ is a cochain, then $\C(G,H,\omega,\mu)$ and $\C(G,H,\omega d\theta,\mu\theta)$ are equivalent. In particular, $\C(G,H,\omega,\mu)$ is equivalent to $\C(G,H,\omega',1)$ where $\omega'=\omega d\tilde\theta\inv$ with any extension $\tilde\theta$ of $\theta$ to $G\times G$. If $\lambda\colon H\to\C^\times$ is a cochain, then $\C(G,H,\omega,\mu)$ is equivalent to $\C(G,H,\omega,\mu d\lambda)$.

 Two fusion categories are categorically Morita equivalent if and only if their Drinfeld centers are equivalent braided monoidal categories \cite{MR2183279}. The Drinfeld center of $\Vect_G^\omega$ is equivalent to the module category of the twisted Drinfeld double $D_\omega(G)$ introduced in \cite{MR1128130}; the modular data of these modular categories is the focus of \cite{MR1770077}.
\section{Pointed fusion categories of rank $pq$}
\label{sec:pointed}

Let $p,q$ be odd primes such that $p|q-1$. It is well-known that there exist exactly two non-isomorphic groups of order $pq$, namely the cyclic group and a nonabelian group, denoted $G$ in the sequel, which is a semidirect product $\ZZ_q\semidir \ZZ_p=\ZZ_q\semidir_n \ZZ_p$ in which the generator of $\ZZ_p$ acts on $\ZZ_q$ as multiplication by an element $n$ of multiplicative order $p$ in $\ZZ_q$. Let $\kappa\colon\ZZ_p^3\to\CC^\times$ denote a representative of a generator of the cohomology group $H^3(\ZZ_p,\CC^\times)\cong \ZZ_p$. We can assume that $\kappa$ takes values in $p$-th roots of unity.

We note for later use that the automorphism of $\ZZ_p$ given by multiplication with $m\in\ZZ$ (prime to $p$) induces the action on $H^3(\ZZ_p,\CC^\times)\cong\ZZ_p$ given by multiplication with $m^2$. This can be deduced by a direct calculation with the well-known periodic $\ZZ_p$-resolution of $\ZZ$ \cite[IV.7]{MR1344215}. A more conceptual proof uses the structure of the integral cohomology ring of $\ZZ_p$ and the fact that the group has periodic cohomology, cf.~\cite{MR0077480,MR672956}. In fact $H^3(\ZZ_p,\CC^\times)\cong H^4(\ZZ_p,\ZZ)$, and an isomorphism between $H^k(\ZZ_p,\ZZ)$ and $H^{k+2}(\ZZ_p,\ZZ)$ is given by multiplication with a generator $\eta$ of $H^2(\ZZ_p,\ZZ)\cong \ZZ_p$ in the cohomology ring. Thus $H^4(\ZZ_p,\ZZ)\cong\ZZ_p$ is generated by $\eta^2$; the action of $m$ on $H^2(\ZZ_p,\ZZ)$ is given by the action on $\ZZ_p$, and thus the action on $H^4(\ZZ_p,\ZZ)$ is given by $m^2$. 

Now we can define $p$ pointed fusion categories $\C_0,\dots\C_{p-1}$ by $\C_u=\Vect_G^{\omega^u}$ where $\omega=\Inf_{\ZZ_p}^G\kappa$ is the inflation of $\kappa$ to $G$.
\begin{Lem}\label{Lem:pStueck}
  The categories $\C_0,\dots,\C_{p-1}$ are pairwise not categorically Morita equivalent. Equivalently, the Drinfeld centers $\CTR(\C_0),\dots,\CTR(\C_{p-1})$ are pairwise inequivalent braided monoidal categories.
\end{Lem}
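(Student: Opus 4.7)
My plan combines a cohomology computation with an analysis of pointed Morita representatives; the cohomology gives inequivalence of the $\C_u$ as fusion categories, and the Morita analysis upgrades this to categorical Morita inequivalence.

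First I would compute $H^3(G, \CC^\times)$ using the Lyndon--Hochschild--Serre spectral sequence for $1 \to \ZZ_q \to G \to \ZZ_p \to 1$. Because $\gcd(p,q)=1$ and $\ZZ_p$ acts nontrivially on $\ZZ_q$, the groups $H^i(\ZZ_p, H^j(\ZZ_q, \CC^\times))$ vanish whenever $j>0$: for $i>0$ by coprimality of orders, and for $i=0$ because $\ZZ_q$ has no nonzero $\ZZ_p$-fixed elements, and likewise for each $H^j(\ZZ_q, \CC^\times)$. Thus inflation yields $H^3(G, \CC^\times) \cong H^3(\ZZ_p, \CC^\times) \cong \ZZ_p$, and the classes $[\omega^u]$ exhaust this group. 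Next I show $\operatorname{Aut}(G)$ acts trivially on $H^3(G, \CC^\times)$: since $\ZZ_q$ is the unique Sylow $q$-subgroup and hence characteristic, every $f \in \operatorname{Aut}(G)$ descends to $\bar f \in \operatorname{Aut}(\ZZ_p)$; writing $f(x)=x^k z$ with $z\in\ZZ_q$ and $f(y)=y^a$, compatibility with $xyx\inv = y^n$ forces $n^k \equiv n \pmod q$, and hence $k\equiv 1\pmod p$ since $n$ has order $p$, so $\bar f = \id$. The $p$ classes $[\omega^u]$ therefore represent $p$ distinct $\operatorname{Aut}(G)$-orbits, so $\C_u \not\simeq \C_v$ as fusion categories for $u\ne v$.

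To upgrade this to Morita inequivalence, I would show that the Morita class of each $\C_u$ contains, up to equivalence, exactly one pointed fusion category with underlying group $G$ --- namely $\C_u$ itself --- so that Morita equivalence of $\C_u$ and $\C_v$ would force equivalence as fusion categories, excluded above. By Ostrik's classification, a $\C_u$-module category corresponds to a pair $(H, \mu)$ with $H\le G$ and $d\mu = \omega^u|_H$; the dual $\C(G, H, \omega^u, \mu)$ is pointed of rank $|G|$ only for $H$ normal with $\omega^u|_H$ a coboundary and $\mu$ yielding only $1$-dimensional twisted $H$-representations. For our $G$ this restricts $H$ to $\{e\}$ (returning $\C_u$) or $\ZZ_q$ (where $\omega^u|_{\ZZ_q}=1$ automatically from the inflation, so $\mu=1$ suffices). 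One then checks that $\C(G, \ZZ_q, \omega^u, 1)$ is pointed with group of invertibles isomorphic to $G$, and a direct Ostrik-style computation of the associator should identify it with $\Vect_G^{\omega^u}$ itself.

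The main obstacle is precisely this last associator computation. If the direct approach proves delicate, the backup is to separate the Drinfeld centers $\CTR(\C_u)$ by direct modular invariants: for simple objects of $\CTR(\C_u)$ of the form $(y^a x^i, \rho)$ with $i\ne 0$, a computation of the twisted centralizer cocycle $\beta_g$ shows that the ribbon twist $\theta$ satisfies $\theta^p = \zeta^{i^2 u}$ for a fixed primitive $p$-th root of unity $\zeta$; combined with further data such as the full $S$-matrix or the Galois action on simples, this should suffice to separate all $p$ classes.
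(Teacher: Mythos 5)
Your overall strategy is the same as the paper's: first show the $\C_u$ are pairwise inequivalent fusion categories because $\operatorname{Aut}(G)$ acts trivially on $H^3(G,\CC^\times)\cong\ZZ_p$, then show that the only pointed fusion category in the Morita class of $\C_u$ is $\C_u$ itself. Your first half is correct and in fact more detailed than the paper's one-line version, and your reduction of the possible pairs $(H,\mu)$ to $H\in\{e,\ZZ_q\}$ with $\mu$ trivial matches the paper. The problem is that you stop exactly at the step that carries the mathematical content, namely the identification of the dual category $\C(G,\ZZ_q,\omega^u,1)$ with $\Vect_G^{\omega^u}$, which you defer to ``a direct Ostrik-style computation of the associator [that] should identify it with $\Vect_G^{\omega^u}$ itself.'' This is not a formality: a priori the dual could be $\Vect_G^{\omega^{u'}}$ for some $u'\neq u$ (for instance $u'=-u$), in which case $\C_u$ and $\C_{u'}$ \emph{would} be Morita equivalent and the lemma would fail. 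The paper settles this by citing Naidu's explicit computation \cite[Thm.2.1]{MR2333187}: the dual is $\Vect_{G'}^{\Inf_{\ZZ_p}^{G'}\kappa^u}$ with $G'=\ZZ_q\semidir_{n^{-1}}\ZZ_p$, the isomorphism $G'\cong G$ induces inversion on the quotient $\ZZ_p$, and inversion acts on $H^3(\ZZ_p,\CC^\times)$ as multiplication by $(-1)^2=1$, so the cocycle pulls back to $\omega^u$ on the nose. Some such argument (or an independent reason why the induced involution $u\mapsto u'$ must be the identity) is needed; without it the proof is incomplete.

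Your proposed fallback is worse than incomplete: it cannot work. Separating the centers $\CTR(\C_u)$ by ``direct modular invariants'' such as the $T$- and $S$-matrices is impossible for $p>3$ --- that is precisely the main theorem of the paper. The twist computation you sketch, $\theta^p=\zeta^{i^2u}$, detects only whether $u$ is zero, a quadratic residue, or a nonresidue modulo $p$, yielding at most three classes, and the paper shows (via the Galois symmetry $\sigma(S_{ij})=S_{i,\hat\sigma(j)}$, $\sigma^2(T_{ii})=T_{\hat\sigma(i),\hat\sigma(i)}$ combined with $\sigma^r\omega=\omega^{m^r}$) that the full modular data likewise collapse to at most three sets; the ``Galois action on simples'' is itself read off from the $S$-matrix and adds nothing. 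So the backup route should be discarded, and the main route needs the dual-category identification supplied.
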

\begin{proof}
  Any automorphism of $G$ fixes the unique subgroup of order $q$ and induces the identity on the quotient $\ZZ_p$. Thus, the $p$ pointed categories are pairwise inequivalent fusion categories. The only fusion categories categorically Morita equivalent to $\C_u$ are the group-theoretical fusion categories $\C(G,H,\widetilde\omega,\mu)$ where $\widetilde\omega$ is a three-cocycle on $G$ mapped to $\omega^u$ by an automorphism of $G$, $H$ is a subgroup of $G$ and $\mu$ is a two-cochain on $H$ whose coboundary is the restriction of $\widetilde\omega$. For $\C(G,H,\widetilde\omega,\mu)$ to be pointed, it is necessary (see e.g.~\cite{MR2362670}) that $H$ be abelian and normal in $G$. Now the only three-cocycle mapped to $\omega^u$ by an automorphism of $G$ is $\omega^u$ itself. The only nontrivial abelian normal subgroup is $H=\ZZ_q$, and the restriction of $\omega^u$ to $H$ is trivial, so $\mu$ has to be a two-cocycle. Since the second cohomology group of $H$ is trivial, we can assume that $\mu$ is trivial outright. In this case the category $\C(G,H,\omega^u,1)$ is explicitly known \cite[Thm.2.1]{MR2333187} in the form $\Vect_{G'}^\alpha$ and can be described as follows: $G'$ is the semidirect product of $\ZZ_p$ with the dual group of $\ZZ_q$ under the dualized action, and $\alpha=\Inf_{\ZZ_p}^{G'}\kappa^u$. If we identify $\ZZ_q$ with its dual, this means that $G'=\ZZ_q\semidir_{n'}\ZZ_p$, with the generator of $\ZZ_p$ acting as multiplication with the multiplicative inverse $n'=n^{p-1}$ of $n$ modulo $q$. This in turn means that $G\cong G'$, with the isomorphism inducing the inversion map on the quotient $\ZZ_p$. But the inversion map, or multiplication by $-1$, acts as the identity on $H^3(\ZZ_p,\CC^\times)$, so the isomorphism pulls $\Inf_{\ZZ_p}^{G'}\kappa^u$ back to $\omega^u$, and we conclude that $\C(G,H,\omega^u,1)$ is equivalent to $\C_u$ as a fusion category.
\end{proof}

For the primary purpose of the present paper having found the $p$ distinct categories above is sufficient --- we will see in the next section that they afford only three distinct modular data. For completeness we will continue and classify all pointed fusion categories of dimension $pq$ up to categorical Morita equivalence:

\begin{Lem}
  Let $p<q$ be primes. Then two pointed fusion categories of rank $pq$ are categorically Morita equivalent if and only if they are equivalent monoidal categories.
  \begin{enumerate}
  \item If $p$ is odd and $p\mid q-1$, then there are exactly $p+9$ equivalence classes of pointed fusion categories of dimension $pq$, namely the $p$ categories defined above, and nine categories of the form $\Vect_{\ZZ_p\times\ZZ_q}^\alpha$.
  \item If $p$ is odd and $p\nmid q-1$, there are exactly nine equivalence classes of pointed fusion categories of dimension $pq$.
  \item If $p=2$, then there are exactly $12$ equivalence classes of pointed fusion categories of dimension $pq$.
  \end{enumerate}
\end{Lem}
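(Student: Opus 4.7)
The strategy is in two stages: first classify pointed fusion categories of dimension $pq$ up to monoidal equivalence by enumerating orbits of $\operatorname{Aut}(G)$ on $H^3(G,\CC^\times)$ for each group $G$ of order $pq$, and then verify that distinct orbits give distinct categorical Morita equivalence classes.

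The groups of order $pq$ are $\ZZ_{pq}$ and, whenever $p\mid q-1$ (which always holds when $p=2$, yielding $D_q$), the semidirect product $\ZZ_q\semidir\ZZ_p$ already considered in Lemma~\ref{Lem:pStueck}. For the abelian group, Künneth gives $H^3(\ZZ_{pq},\CC^\times)\cong\ZZ_p\oplus\ZZ_q$ with $\operatorname{Aut}(\ZZ_{pq})\cong\ZZ_p^\times\times\ZZ_q^\times$ acting componentwise. Identifying $H^3(\ZZ_n,\CC^\times)$ via the Bockstein with $H^4(\ZZ_n,\ZZ)\cong\ZZ_n$ (generated by the square of the extension class in $H^2(\ZZ_n,\ZZ)$), one checks that $a\in\ZZ_n^\times$ acts by multiplication by $a^{-2}$; on $\ZZ_n$ this has three orbits when $n$ is an odd prime (zero, nonzero squares, nonzero non-squares) and two orbits when $n=2$. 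Hence $\ZZ_{pq}$ contributes $3\times 3=9$ classes if both primes are odd and $2\times 3=6$ classes if $p=2$. For the nonabelian group with $p$ odd, the Lyndon--Hochschild--Serre sequence of $1\to\ZZ_q\to G\to\ZZ_p\to 1$ collapses (the $\ZZ_q$-rows vanish because $\gcd(p,q)=1$), giving $H^3(G,\CC^\times)\cong\ZZ_p$; the argument in Lemma~\ref{Lem:pStueck} shows that every automorphism of $G$ induces the identity on the quotient, hence acts trivially on $H^3(G,\CC^\times)$, contributing $p$ classes. For $D_q$ the same spectral sequence, together with the fact that inversion on $\ZZ_q$ acts on $H^3(\ZZ_q,\CC^\times)$ as $(-1)^{-2}=1$, yields $H^3(D_q,\CC^\times)\cong\ZZ_q\oplus\ZZ_2$; the action of $\operatorname{Aut}(D_q)\cong\ZZ_q\semidir\ZZ_q^\times$ is by squaring on $\ZZ_q$ and trivial on $\ZZ_2$, contributing $3\times 2=6$ classes. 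Summing the contributions yields the totals $p+9$, $9$, and $6+6=12$ claimed in the three cases.

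It remains to show that no distinct monoidal classes become identified by categorical Morita equivalence. Any pointed category Morita equivalent to $\Vect_G^\omega$ has the form $\C(G,H,\omega,\mu)$ for $H$ abelian normal in $G$ and $\mu$ trivializing $\omega|_H$. The nonabelian case with $p$ odd is handled by Lemma~\ref{Lem:pStueck}; the same argument adapts to $D_q$ with $H=\ZZ_q$ the only nontrivial candidate, and the resulting category is identified as $\Vect_{D_q}^{\omega'}$ with $\omega'$ in the same orbit. For $\ZZ_{pq}$ the Künneth decomposition $\Vect_{\ZZ_{pq}}^\omega\simeq\Vect_{\ZZ_p}^{\omega_p}\boxtimes\Vect_{\ZZ_q}^{\omega_q}$ lets one factor Morita dualities over the Deligne product, reducing the problem to $\Vect_{\ZZ_r}^{\omega_r}$ with $r$ prime, where the only nontrivial choice $H=\ZZ_r$ forces $\omega_r$ trivial and returns $\Rep(\ZZ_r)\cong\Vect_{\ZZ_r}$, staying within the monoidal class.

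The main technical hurdle is establishing the squaring action on $H^3(\ZZ_n,\CC^\times)$; this promotes the naive orbit count of $2\times 2=4$ on $\ZZ_{pq}$ to the correct $9$, and is the essential input already used implicitly in Lemma~\ref{Lem:pStueck} (where inversion on $\ZZ_p$ acts trivially on $H^3(\ZZ_p,\CC^\times)$). The dihedral case also needs care because of the extra $\ZZ_2$-summand in $H^3(D_q,\CC^\times)$ and the resulting two-component orbit structure that must be tracked through the Morita analysis.
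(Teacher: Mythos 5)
Your plan follows the same route as the paper: count orbits of $\operatorname{Aut}(G)$ on $H^3(G,\CC^\times)$ for each group of order $pq$ (using the Lyndon--Hochschild--Serre sequence for the nonabelian group and the squaring action of $\ZZ_n^\times$ on $H^3(\ZZ_n,\CC^\times)$, which gives the $3$-orbit count for odd primes), and then check via the group-theoretical description that distinct monoidal classes are not identified by categorical Morita equivalence. The orbit counts and totals agree with the paper's, and your extra detail on the Morita step (Deligne-product factorization for $\ZZ_{pq}$, the $H=\ZZ_q$ analysis for $D_q$) only fills in what the paper dismisses as ``easy to see.''
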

\begin{proof}
  We have $H^3(\ZZ_p,\CC^\times)\cong\ZZ_p$, and the automorphism of $\ZZ_p$ given by multiplication with $x\in\ZZ_p^\times$ acts on the cohomology group as multiplication with $x^2$; thus there are three orbits, namely zero, the quadratic residues, and the quadratic nonresidues (except when $p=2$, where there are two orbits). The same holds for $q$, and thus $H^3(\ZZ_p\times\ZZ_q,\CC^\times)\cong \ZZ_p\times\ZZ_q$ has $3\cdot 3=9$ orbits under the action of the automorphism group of $\ZZ_p\times\ZZ_q$ if $p>2$, and $6$ if $p=2$. It is easy to see that the nine, resp. six pointed fusion categories thus obtained are categorically Morita equivalent only to themselves.
 
 Now let $p|q-1$. The cohomology groups of the nonabelian group $G$ of order $pq$ (are surely well-known and) can be readily computed using \cite[Cor.10.8]{MR1344215}: For the unique subgroup $H$ of order $q$ there is a short exact sequence
  \begin{equation*}
    0\rightarrow H^3(\ZZ_p,\CC^\times)\to H^3(G,\CC^\times)\to H^3(\ZZ_q,\CC^\times)^{\ZZ_p}\to 0.
  \end{equation*}
If the generator of $\ZZ_p$ acts on $\ZZ_q$ as multiplication by $x$, it acts on $H^3(\ZZ_q,\CC^\times)$ as multiplication by $x^2$. Thus, if $p>2$,  the action is nontrivial,  the invariants are trivial, and every cohomology class is inflated from $\ZZ_p$, giving the $p$ categories we have already studied. 

If $p=2$ (so $G$ is a dihedral group), then the action is trivial, thus $H^3(G,\CC^\times)\cong \ZZ_2\times\ZZ_q$. Any automorphism of $\ZZ_q$ extends to an automorphism of $G$, thus the automorphism group of $G$ acts on the cohomology group as $\ZZ_q$ acts on its cohomology group, giving three orbits on $\ZZ_q$, and thus six orbits on $\ZZ_2\times\ZZ_q$.
\end{proof}
\section{Equivalence of modular data}
\label{sec:sameST}

We will now determine to what extent the modular data distinguish the centers of the pointed categories of dimension $pq$ listed in the previous section. We will first concentrate on showing that the $p$ pointed fusion categories with noncommutative Grothendieck ring from the start of the previous section have too few distinct modular data to be distinguished by them. This gives the desired counterexample in the paper's title. The main trick is an application of Galois group actions, on the one hand on the cocycles' values, on the other hand in the form of the Galois action defined on modular fusion categories in general.

We will then proceed to investigate to what extent exactly the modular data distinguish centers of pointed fusion categories of dimension $pq$.

Consider the situation of \cref{Lem:pStueck}.
The $p$ different cocycles $\omega^u$ are not so fundamentally different: With the exception of $\omega^0$, any two of them are mapped to each other by an element of the absolute Galois group of abelian extensions $\Gamma:=\Gal(\Q^\ab/\Q)$. In fact there is $\sigma\in\Gamma$ such that for each $i$ we get $\omega^u=\sigma^j\omega$ for some $j\in\{0,...,p-2\}$: it suffices to arrange $\sigma(\zeta_p)=\zeta_p^m$ for a primitive root $m$ modulo $p$. 

It is quite tempting to simply declare as obvious:
\begin{Prop}
  Denote $S^{(u)},T^{(u)}$ the modular data of $\CTR(\Vect_G^{\omega^u})$. Let $\sigma\in\Gamma$ satisfy $\sigma(\zeta_p)=\zeta_p^m$ for $m$ a primitive root modulo $p$. Then there are bijections $p_r$ between the simples of $\CTR(\Vect_G^{\omega})$ and $\CTR(\Vect_G^{\sigma^r\omega})=\CTR(\Vect_G^{\omega^{m^r}})$ such that $S_{p_r(i),p_r(j)}^{(v)}=\sigma^r(S_{ij}^{(1)})$ and $T_{p_r(i)}^{(v)}=\sigma^r(T_{i}^{(1)})$ when $v\equiv m^r(p)$.
\end{Prop}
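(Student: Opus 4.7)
The plan is to use the explicit formulas for the modular data of the twisted Drinfeld double $D_{\omega^v}(G)$ recorded in \cite{MR1770077}. These formulas express each entry of $S^{(v)}$ and $T^{(v)}$ as a rational expression over $\Q$ in the values of the cocycle $\omega^v$ and in (projective) character values of centralizers. Because $\omega$ takes values in $p$-th roots of unity and $\sigma(\zeta_p)=\zeta_p^m$, we have $\sigma^r(\omega)=\omega^{m^r}$ pointwise as functions $G^3\to\CC^\times$; hence $\sigma^r$ sends the formulas defining $S^{(1)},T^{(1)}$ termwise to the formulas defining $S^{(v)},T^{(v)}$ for $v\equiv m^r\pmod p$. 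The proposition should then follow once the resulting bijection on the labelling sets of simples is correctly identified.

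Recall the labelling of simples of $\CTR(\Vect_G^{\omega^v})$ by equivalence classes of pairs $(g,\pi)$, where $g$ is a conjugacy class representative in $G$ and $\pi$ is an irreducible $\beta_g^{(v)}$-projective representation of the centralizer $C_G(g)$, for a 2-cocycle $\beta_g^{(v)}\colon C_G(g)\times C_G(g)\to\CC^\times$ built from the values of $\omega^v$. Since $\beta_g^{(v)}$ is the pointwise $v$-th power of $\beta_g^{(1)}$, and $\beta_g^{(1)}$ again takes values in $p$-th roots of unity, we have $\sigma^r(\beta_g^{(1)})=\beta_g^{(v)}$ whenever $v\equiv m^r\pmod p$. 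Applying $\sigma^r$ to character values provides a bijection between irreducible $\beta_g^{(1)}$-projective characters of $C_G(g)$ and irreducible $\beta_g^{(v)}$-projective characters, because irreducibility is detected by the $\Q$-valued inner product of characters and is therefore Galois-invariant. We then define $p_r[(g,\pi)]=[(g,\sigma^r(\pi))]$, where $\sigma^r(\pi)$ is any irreducible projective representation with character $\sigma^r(\chi_\pi)$.

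The equality $T^{(v)}_{p_r(i)}=\sigma^r(T^{(1)}_i)$ is then immediate from the DPR formula for the ribbon twist, which is essentially $\chi_\pi(g)/\dim\pi$ together with a factor built from $\omega^v$: the denominator $\dim\pi\in\N$ is fixed by $\sigma^r$, while numerator and cocycle factor transform as prescribed. The equality for the $S$-matrix is verified by inspecting the formula of \cite{MR1770077}: it is a finite sum, indexed by a subset of $G\times G$ depending only on the representatives $g,h$ and not on $\omega$, of products of values of $\omega^v$, $\chi_\pi$, and $\chi_{\pi'}$, with rational prefactors depending only on $|C_G(g)|$ and $|C_G(h)|$. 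Each summand is sent by $\sigma^r$ to the corresponding summand of $S^{(v)}_{p_r(i),p_r(j)}$, and the identity follows by linearity. The main obstacle is the bookkeeping involved in matching the conventions of \cite{MR1770077} (precise definition of $\beta_g$, of the representatives of projective representations, and of the normalizing prefactors) with the Galois action on cocycle and character values; once these are aligned, the proposition reduces to applying $\sigma^r$ termwise to each formula.
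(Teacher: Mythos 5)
Your proposal is correct and follows essentially the same route as the paper: label the simples by pairs $(g,\chi)$ with $\chi$ an irreducible projective character of the centralizer, observe that $\sigma^r$ carries the relevant cocycles of $\omega$ to those of $\omega^{m^r}$, define $p_r$ by $(g,\chi)\mapsto(g,\sigma^r\chi)$, and apply $\sigma^r$ termwise to the formulas of \cite{MR1770077}. The only (immaterial) difference is that the paper justifies that $\sigma^r\chi$ is again an irreducible projective character by applying $\sigma^r$ to the matrix coefficients of a representing matrix representation, rather than by Galois-invariance of the inner product.
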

A rather informal argument would be the following: The definition of $\Vect_G^\omega$ depends on the choice of a $p$-th root of unity involved in the definition of our standard generator of the third cohomology group. Of course, neither the category thus defined, nor anything constructed from it (like the Drinfeld center with its braiding, or the modular data) can crucially depend on that choice. Any other choice will only lead to analogous constructions, with the root of unity replaced in the appropriate way by another root.

We will give a somewhat more precise argument in the following
\begin{proof}
  The simple objects of $\CTR(\Vect_G^\omega)$, which is the module category over the twisted Drinfeld double $D_\omega(G)$, were given in \cite{MR1128130}; the simples are parametrized by pairs $(g,\chi)$, where $g\in G$ represents a conjugacy class, and $\chi$ is an irreducible $\alpha_g$-projective character of the centralizer $C_G(g)$; here $\alpha_g$ is a certain two-cocycle on $C_G(g)$ computed from $\omega$. We can assume that $\chi$ is the character of a projective matrix representation $\rho$ whose matrix coefficients are in a cyclotomic field. When we replace $\omega$ by $\sigma\omega$, then $\alpha_g$ changes to $\sigma\alpha_g$, applying $\sigma$ to the matrix coefficients of $\rho$ yields a $\sigma\alpha_g$-projective representation, whose character is $\sigma\chi$. In \cite{MR1770077}, formulas are given that compute the modular data of $\CTR(\Vect_G^\omega)$ in terms of the group structure, the cocycle $\omega$, and the projective characters representing the simple objects. Now applying $\sigma$ to a matrix coefficient of the $S$- or $T$-matrix amounts to applying $\sigma$ to the values of all the characters and cocycles involved in the formula in \cite{MR1770077}; the bijection $p_r$ therefore just maps the simple corresponding to the pair  $(g,\chi)$ to that corresponding to $(g,\sigma^r\chi)$.
\end{proof}

Next, we consider the Galois action defined for any modular fusion category, originating in \cite{MR1120140,MR1266785}; see \cite[Appendix]{MR2183279}. Since our modular category $\C=\CTR(\Vect^\omega_G)$ is integral, there is, for each $\sigma\in\Gamma$, a unique permutation $\hat\sigma$ of the simple objects of $\C$ such that $\sigma(S_{ij})=S_{i,\hat\sigma(j)}$ (in the general case this is true up to a sign). Thus $\sigma^2(S_{ij})=S_{\hat\sigma(i),\hat\sigma(j)}$. In addition, it is shown in  \cite{MR3435813} that $\sigma^2(T_{ii})=T_{\hat\sigma(i),\hat\sigma(i)}$. 

Combined with the preceding remark, we obtain
\begin{Cor}
  There are at most three different sets among the modular data of the $p$ modular fusion categories $\CTR(\Vect_G^{\omega^u})$. More precisely, the modular data of $\CTR(\Vect_G^{\sigma^r(\omega)})$ only depends on the parity of $r$, so that at least $(p-1)/2$ of these categories share the same modular data.

In particular, for $p>3$ the modular data of the categories $\CTR(\Vect_G^{\omega^u})$ does not distinguish them up to braided equivalence, and for a prime $p>2k$ there are $k$ pairwise braided inequivalent categories among the $\CTR(\Vect_G^{\omega^u})$ that share the same modular data.
\end{Cor}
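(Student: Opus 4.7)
The plan is to combine the two Galois actions that have just been introduced: the action from the preceding proposition, which relates modular data across the $p$ different cocycles, and the intrinsic Galois action that permutes the labels within a single modular category.

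First, I apply $\sigma^r$ to the entries of $S^{(1)}$ and $T^{(1)}$. By the preceding proposition this produces precisely $S^{(v)}$ and $T^{(v)}$ for $v\equiv m^r\pmod p$, after the relabelling $p_r$ of simples. On the other hand, for $r=2s$ even, the intrinsic Galois action yields $\sigma^{2s}(S^{(1)}_{ij})=S^{(1)}_{\hat\sigma^s(i),\hat\sigma^s(j)}$ and $\sigma^{2s}(T^{(1)}_{ii})=T^{(1)}_{\hat\sigma^s(i),\hat\sigma^s(i)}$, so the same application of $\sigma^{2s}$ amounts to a mere relabelling of $(S^{(1)},T^{(1)})$. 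Comparing the two outputs, the modular data of $\CTR(\Vect_G^{\omega^{m^{2s}}})$ and of $\CTR(\Vect_G^{\omega})$ coincide up to reindexing of simples, and thus agree as unordered modular data.

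Since $m$ is a primitive root modulo $p$, the set $\{m^{2s}\bmod p\}$ is exactly the set of $(p-1)/2$ nonzero quadratic residues mod $p$, while $\{m^{2s+1}\bmod p\}$ consists of the $(p-1)/2$ quadratic nonresidues. Thus for $u\ne 0$ the modular data of $\CTR(\Vect_G^{\omega^u})$ depend only on the quadratic character of $u$, giving at most two values; together with the separate case $u=0$ this yields at most three distinct modular data in total, with $(p-1)/2$ of the categories $\CTR(\Vect_G^{\omega^u})$ sharing each of the two nontrivial classes.

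For the remaining assertions, \cref{Lem:pStueck} guarantees that the $p$ categories $\CTR(\Vect_G^{\omega^u})$ are pairwise inequivalent as braided monoidal categories, so the $(p-1)/2$ that share a common modular datum are in particular pairwise braided inequivalent. Hence $p>3$ already gives $(p-1)/2\ge 2$ inequivalent categories with identical modular data, and for $p>2k$ we obtain $k$ of them. The main technical point is the combined Galois argument above: the intrinsic Galois action becomes an internal relabelling only for \emph{even} powers of $\sigma$, which is precisely what forces the modular data to depend only on the parity of $r$ and produces the three-orbit structure on the exponents.
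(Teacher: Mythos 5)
Your proof is correct and follows exactly the route the paper intends: the paper gives no explicit proof of this corollary beyond the phrase ``Combined with the preceding remark, we obtain,'' and your argument is precisely that combination — matching $\sigma^{2s}$ applied to $(S^{(1)},T^{(1)})$ once via the proposition (giving the data of $\CTR(\Vect_G^{\omega^{m^{2s}}})$) and once via the intrinsic Galois action (giving a mere relabelling), then counting quadratic residues and invoking \cref{Lem:pStueck}.
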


Note that the smallest example thus obtained is for the nonabelian group of order $55$; the rank of the twisted double is $49$. 

This completes the task of finding the counterexample indicated in the present paper's title. We will give a ``better'' counterexample in the next section, but first we continue to analyze the modular data for completeness:

The group $G=\mathbb{Z}_q \rtimes_n \mathbb{Z}_p$ 
has the presentation
$$G= \langle a,b | a^q = b^p =1 , bab^{-1} = a^n \rangle.$$
There are $\frac{q-1}{p}+p$ conjugacy classes of $G$. A set of representatives of those classes is given by:
$$\lbrace b^k , k \in \{0, \dots , p-1\} \rbrace \cup \lbrace a^l , n^l \equiv 1 \pmod p \rbrace$$
For our calculations we choose a representative for a generator of the third cohomology group
$H^3(\mathbb{Z}_p,\mathbb{C^\times}) \cong \mathbb{Z}_p$, namely \cite[(E.14)]{MooSei:CQCFT}
\begin{equation}\label{eq}
\kappa(\bar{j},\bar{k},\bar{l}):=\exp \left( \frac{2 \pi i}{p^2}[l]([j]+[k]-[j+k])\right)
\end{equation}
where, for a integer $m$, $\bar{m}$ denotes its class in $\mathbb{Z}_p$ and $[m] \in \lbrace 0 , \dots , p-1 \rbrace$ with $m \equiv [m] \pmod p$.

The simple
$D_ \omega(G)$-modules are parametrized by couples $(g, \chi)$ where $g$ is a representative of a conjugacy class in $G$, and $\chi$ is the character of an irreducible $\alpha_g$-projective representation of the centralizer $C_G(g)$, where the $2$-cocycle $\alpha_g$ on $C_G(g)$ is given by: 
$$\alpha_g(x,y):=\omega(g,x,y) \omega^{-1}(x,g,y) \omega(x,y,g)$$
Finally, the element of the $T$-matrix of $D_ \omega(G)-\Mod$ corresponding to a simple $(g,\chi)$ is the scalar $\theta(g,\chi):=\chi(g) / \chi(1_G)$.

\begin{Prop}
For odd prime numbers $p$ and $q$ such that $p | q-1$, there are exactly $3$ different $T$-matrices for the $p$ non-equivalent modular tensor categories $\CTR(\Vect^{\omega^u}_G)$, where $G:=\mathbb{Z}_q \rtimes_n \mathbb{Z}_p$.
\end{Prop}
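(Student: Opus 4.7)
The plan has two components: invoking the upper bound of three distinct $T$-matrices coming from the preceding corollary, and then exhibiting three that are actually pairwise distinct. The upper bound is immediate: for $\sigma\in\Gamma$ with $\sigma(\zeta_p)=\zeta_p^m$ and $m$ a primitive root modulo $p$, the corollary shows that the modular data of $\CTR(\Vect_G^{\sigma^r\omega})$ depends only on the parity of $r$, so the $p$ values of $u$ in $u=m^r$ fall into the three Galois orbits $\{0\}$, the quadratic residues, and the quadratic non-residues mod $p$, on each of which the $T$-matrix is constant. It remains to show that these three $T$-matrices are in fact distinct.

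My first step is to identify where the $u$-dependence sits. For the identity class one has $\theta(1,\chi)=1$ identically. For a class $[a^l]$ with $l\neq 0$, the centralizer $\langle a\rangle\cong\ZZ_q$ receives only the trivial restriction of $\omega^u$, since $\omega^u$ is inflated from the quotient $\ZZ_p$; hence $\alpha_{a^l}^u=1$, the corresponding simples carry ordinary characters of $\ZZ_q$, and the $\theta$-values $\zeta_q^{lm}$ are independent of $u$. Consequently the three candidate $T$-matrices can only differ in the entries indexed by $(b^k,\chi)$ with $k\in\{1,\ldots,p-1\}$.

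For these I would compute $\alpha_{b^k}^u$ on $\langle b\rangle\cong\ZZ_p$ directly from \eqref{eq}; two of the three factors in the definition of $\alpha_g$ cancel, leaving
\[
\alpha_{b^k}^u(b^i,b^j)=\zeta_{p^2}^{uk([i]+[j]-[i+j])}.
\]
Since $H^2(\ZZ_p,\CC^\times)=0$, this is a coboundary, and a short verification shows that $\lambda(b^i)=\zeta_{p^2}^{-uk[i]}$ trivializes it. The irreducible $\alpha_{b^k}^u$-projective characters of $\ZZ_p$ are therefore $\chi_r(b^i)=\zeta_{p^2}^{uk[i]}\zeta_p^{ri}$ for $r\in\{0,\ldots,p-1\}$, and evaluation at $b^k$ produces the $T$-entries
\[
\theta(b^k,\chi_r)=\zeta_{p^2}^{uk^2+prk},\qquad k\in\{1,\ldots,p-1\},\ r\in\{0,\ldots,p-1\}.
\]

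Finally, to distinguish the three cases I would pass through the quotient $\mu_{p^2}\to\mu_{p^2}/\mu_p\cong\ZZ_p$ that reduces the exponent $uk^2+prk$ modulo $p$ to $uk^2$. As $k$ ranges over $\ZZ_p^\times$, the square $k^2$ hits each quadratic residue mod $p$ exactly twice, so the resulting multiset in $\ZZ_p$ is supported on $\{0\}$ when $u=0$, on the set of quadratic residues when $u$ is a nonzero square mod $p$, and on the non-residues otherwise. These three supports partition $\ZZ_p$, so the three multisets of $T$-entries differ, and hence so do the three $T$-matrices. The main technical hurdle is the explicit trivialization of $\alpha_{b^k}^u$ and the resulting formula for $\chi_r$; beyond that point the argument is just a comparison of multisets in $\ZZ_p$.
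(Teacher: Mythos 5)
Your proposal is correct and follows essentially the same route as the paper: isolating the $u$-dependence in the classes $b^k$ with $k\neq 0$, computing $\alpha_{b^k}^u(b^i,b^j)=\zeta_{p^2}^{uk([i]+[j]-[i+j])}$ with an explicit trivializing cochain, arriving at $\theta$-values that are $p$-th roots of $\zeta_p^{uk^2}$, and distinguishing the three cases by whether the exponents $uk^2$ run over $\{0\}$, the quadratic residues, or the non-residues modulo $p$. The only cosmetic differences are that you import the upper bound from the Galois-action corollary (the paper's computation yields it directly) and that you spell out the multiset comparison via the quotient $\mu_{p^2}\to\mu_{p^2}/\mu_p$, which the paper leaves implicit.
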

\begin{proof}
Take first a representative of a conjugacy class of $G$ of the form $a^l$ where $l \in \lbrace 0 , \dots , q-1 \rbrace$ is such that $n^l \equiv 1 \pmod p$. The centralizer in $G$ of this representative is the cyclic group $Z_q$ generated by $a$. So, its third cohomology group is trivial; moreover, as $\omega$ is inflated from the quotient $G/\langle a \rangle$, the cocycle $\alpha_{a^l}$ is trivial on $C_{a^l}(G) \cong Z_q$. Therefore, the $\alpha_{a^l}$-projective representations of $Z_q$, and with them the $\frac{q-1}{p} \times q$ associated elements of the $T$-matrix of $\CTR(\Vect^{\omega^u}_G)$ do not depend on the choice of the $3$-cocycle $\omega^u$ on $G$.

Now, take the trivial element $1_G = b^p \in G$. As all the cocycles we consider are normalized, the $2$-cocycle $\alpha_{1_G}$ on $G$ is trivial and again, the $\frac{q-1}{p}+p$ associated elements of the $T$-matrix do not depend on the choice of the $3$-cocycle on $G$.

So, the only conjugacy classes that may lead to elements of $T$ that depend on the cocycle are the ones represented by $b^k$ with $k \in \lbrace 1 ,\dots , p-1 \rbrace$; there are $p^2$ associated entries of the $T$-matrix. Take such a $k$, and also an integer $u \in \lbrace 1 , \dots , p-1 \rbrace$. We compute explicitly the elements of the $T$-matrix of $\CTR(\Vect^{\omega^u}_G)$ corresponding to the representative $b^k$. The centralizer of $b^k$ in $G$ is the cyclic group $Z_p$ generated by $b$. So, the cocycle $\alpha_{b^k}$ is a coboundary. Take a $1$-cochain $\mu_{b^k}$ such that $\partial \mu_{b^k} = \alpha_{b^k}$. Then the $\alpha_{b^k}$-projective representations of $\ZZ_p$ are usual representations twisted by $\mu_{b^k}$. A generator $\chi_0$ of $\widehat{\langle b \rangle}$ is given by: 
$$ \chi_0(b):=\exp\left(\frac{2\pi i}{p}\right)$$
Take $m \in \lbrace 0 , \dots , p-1 \rbrace$. The $T$-matrix element corresponding to the couple $(b^k, \chi_0^m)$ is given by 
$$ \theta(b^k,\chi_0^m)=\chi_0^m(b^k) \mu_{b^k}(b^k)$$
We need now to compute the scalar $\mu_{b^k}(b^k)$. From \ref{eq}, it is easy to see that $\alpha_{b^k}(x,y)=\omega^u(x,y,{b^k})=\exp \left( \frac{2 \pi i}{p^2}ku([x]+[y]-[x+y])\right)$, and so $\mu_ {b^k}(x)=\exp \left( \frac{2 \pi i}{p^2}ku[x]\right)$. Finally we renumber the simples corresponding to $b^k$, letting $(k,m)\in\{0,\dots,p-1\}^2$ correspond to $(b^k,\chi_0^{m'})$ where $m'k\equiv m(p)$ if $k\not\equiv 0(p)$ and $m'=m$ otherwise. Then
\begin{eqnarray*}
\theta(b^k,\chi_0^{m'}) & = & \chi_0^{m'}(b^k) \mu_{b^k}(b^k)\\
 & = & \exp\left(\frac{2\pi i m'k}{p}\right) \exp\left(\frac{2\pi i k^2 u}{p^2}\right)\\
 & = & \exp\left(\frac{2 \pi i}{p^2}(mp+k^2 u) \right).
\end{eqnarray*}
Thus 
\begin{equation*}
  \{\theta(b^k,\chi_0^{m'})|m=0,\dots,p-1\}=\{\zeta_{p^2}^x|x\equiv k^2u(p)\}=\{z\in\CC|z^p=\zeta_p^{k^2u}\}
\end{equation*}
for each $k\in\{0,\dots,p-1\}$. For $k=0$ this gives the set of $p$-th roots of unity, and the union over the remaining values of $k$ only depends on whether $u$ is a quadratic residue modulo $p$ or not: The values of $\theta(b^k,\chi_0^{m'})$ for $k\in\{1,\dots,p-1\}$ and $m\in\{0,\dots,p-1\}$ are the $p$-th roots of the powers $\zeta_p^x$ where $xu$ is a quadratic residue modulo $p$; each value occurs exactly twice.
To conclude, we just need to observe that for the trivial cocycle, all the $\mu_g$ are trivial for the representative $g$ of any conjugacy class of $G$.
\end{proof}

\begin{Cor}
The twisted doubles of groups of order $3q$ with a prime $q>3$ are classified by their modular data.
\end{Cor}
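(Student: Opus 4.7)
The plan is to enumerate the braided equivalence classes of twisted doubles $D_\omega(H)$ with $|H|=3q$ and verify that distinct classes are distinguished by their modular data. Via the Morita correspondence, these classes are in bijection with the categorical Morita equivalence classes of pointed fusion categories of dimension $3q$, enumerated in the second lemma of \cref{sec:pointed}: there are $9$ such classes if $3\nmid q-1$ (all of the form $\Vect_{\ZZ_3\times\ZZ_q}^\omega$), and $12$ if $3\mid q-1$, the additional three being the categories $\C_u=\Vect_G^{\omega^u}$ of \cref{Lem:pStueck} for the nonabelian $G=\ZZ_q\semidir\ZZ_3$.

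The nonabelian classes, when present, are immediately handled by the preceding proposition, which gives three pairwise distinct $T$-matrices for the three centers $\CTR(\C_u)$. For the nine abelian classes, indexed by the $3\cdot 3=9$ orbits of squaring actions on $\ZZ_3\times\ZZ_q$ exhibited in the same lemma, I would use the Deligne product decomposition $\CTR(\Vect_{\ZZ_3\times\ZZ_q}^{\omega_{(a,b)}})\cong\CTR(\Vect_{\ZZ_3}^{\kappa^a})\boxtimes\CTR(\Vect_{\ZZ_q}^{\lambda^b})$, available because $\gcd(3,q)=1$. The modular data of the product is the Kronecker product of those of the two factors. Repeating the cocycle computation of the preceding proposition in the degenerate setting of the cyclic group $\ZZ_p$ itself (which drops the conjugacy classes coming from the normal subgroup $\ZZ_q$ but is otherwise formally identical), the multiset of $T$-entries of $\CTR(\Vect_{\ZZ_p}^{\kappa^c})$ consists of $p$-th roots of the powers $\zeta_p^{k^2c}$, and this already detects whether $c$ is $0$, a quadratic residue, or a quadratic nonresidue modulo $p$. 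Since $\gcd(9,q^2)=1$, every product of a $9$-th and a $q^2$-th root of unity factors uniquely, so the factor $T$-matrices can be read off from the product $T$-matrix, separating the nine abelian classes pairwise.

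Finally, to separate the abelian from the nonabelian classes I would examine the first row of the $S$-matrix: an abelian pointed input yields a pointed Drinfeld center, whose simples all have Frobenius--Perron dimension $S_{i,0}/S_{0,0}=1$, whereas $D_{\omega^u}(G)$ for the nonabelian $G$ has simples of dimensions $1$, $3$ and $q$, arising respectively from the conjugacy classes of the identity, the elements of order $q$, and the elements of order $3$. The main obstacle is the cyclic-group $T$-matrix analysis in the middle paragraph, but it follows the same pattern as the computation already completed in the proof of the preceding proposition.
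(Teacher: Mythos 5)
Your proposal is correct and follows the route the paper intends: the corollary is stated without proof precisely because the preceding proposition (three distinct $T$-matrices for the three categories $\CTR(\C_u)$ when $p=3$) together with the classification lemma of \cref{sec:pointed} reduces everything to the routine abelian cases, which you fill in correctly via the Deligne/Kronecker factorization over coprime orders. The only cosmetic slip is attributing the dimension-$3$ simples solely to the order-$q$ conjugacy classes (the identity class also contributes them through the $3$-dimensional irreducibles of $G$), but this does not affect the pointed-versus-nonpointed distinction you need.
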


\section{More counterexamples}
\label{sec:worse}

The module categories of twisted Drinfeld doubles are fusion categories defined over a cyclotomic field, and for such categories a Galois twist can be defined \cite{2013arXiv1305.2229D}; clearly the categories found above which share the same modular data are Galois twists of each other. One might be tempted to immediately refine the refuted conjecture: Perhaps modular fusion categories defined over a cyclotomic field can only share the same modular data if they are, at worst, Galois twists of each other?

Consider three primes $p<q<r$ such that $p$ divides both $q-1$ and $r-1$. Let $G_q$ be the semidirect product group considered above, define $G_r$ in the same way, and consider $G=G_q\times G_r$. Further define $\omega_{uv}:=\Inf_{G_q}^G\Inf_{\ZZ_p}^{G_q}(\kappa^u)\cdot\Inf_{G_r}^G\Inf_{\ZZ_p}^{G_r}\kappa^v$.

The same arguments as above show that the categories $\C_{uv}:=\Vect_G^{\omega_{uv}}$ are pairwise not categorically Morita equivalent (Again, any automorphism leaves $\omega_{uv}$ invariant, and the only abelian normal subgroups are $\ZZ_q$, $\ZZ_r$, and $\ZZ_q\times\ZZ_r$).

The modular data for $\CTR(\Vect_G^{\omega_{uv}})$ is the Kronecker product of the modular data for $\CTR(\Vect_{G_q}^{\omega^u})$ and $\CTR(\Vect_{G_r}^{\omega^v})$. Thus, there are at most $3\cdot 3=9$ different modular data. But the effect of a Galois automorphism on $\omega_{uv}$ is a cyclic permutation of $\{1,\dots,p-1\}$ which is to be applied to $u$ and $v$ simultaneously (unless $u$ or $v$ is zero). Thus, there are $p+1$ orbits of cocycles, and thus $p+1$ categories among the $\CTR(\Vect_G^{\omega_{uv}})$ that are pairwise inequivalent, even up to a Galois twist.

Of course one may argue that the inequivalent up to Galois twist examples are thus obtained due to a cheap trick: They are Deligne products of two modular categories, and we simply have a separate Galois twist on each factor.
\begin{Qu}
  If two modular categories defined over the cyclotomic numbers are prime in the sense of \cite{MR1990929}, and if they share the same modular data, are they then Galois twists of each other? Is the module category of a twisted Drinfeld double $D_\omega(G)$ of an indecomposable group determined up to Galois twist by its modular data?
\end{Qu}

\bibliographystyle{alpha}

\bibliography{eigene,andere,arxiv,mathscinet}

\end{document}